\newcommand{\R}{\ensuremath{\mathbb{R}}}
\newcommand{\C}{\ensuremath{\mathbb{C}}}
\newtheorem {theorem} {Theorem}
\begin{document}

\title[First integrals for $n$-dimensional Lotka-Volterra systems]
{First integrals of a class of $n$-dimensional Lotka-Volterra
differential systems}

\author[J. Llibre, A.C. Murza and A.E. Teruel]
{Jaume Llibre, Adrian C. Murza and Antonio E. Teruel}

\address{Jaume Llibre, Departament de Matem\`atiques, Universitat
Aut\`onoma de Barcelona, 08193 Bellaterra, Barcelona, Catalonia,
Spain} \email{jllibre@mat.uab.cat}

\address{Adrian C. Murza, Institute of Mathematics ``Simion Stoilow''
of the Romanian Academy, Calea Grivi\c tei 21, 010702 Bucharest,
Romania} \email{adrian\_murza@hotmail.com}

\address{Antonio E. Teruel, Departament de Matem\`{a}tiques i Inform\`atica,
Universitat de les Illes Balears, Crta. de Valldemossa km. 7.5,
07122 Palma de Mallorca, Spain} \email{antonioe.teruel@uib.es}

\keywords{Lotka-Volterra system, Darboux first integrals}

\subjclass[2010]{Primary: 34C07, 34C05, 34C40}

\begin{abstract}
Lotka-Volterra model is one of the most popular in biochemistry. It
is used to analyze cooperativity, autocatalysis, synchronization at
large scale and especially oscillatory behavior in biomolecular
interactions. These phenomena are in close relationship with the
existence of first integrals in this model. In this paper we
determine the independent first integrals of a family of
$n$--dimensional Lotka-Volterra systems. We prove that when $n=3$
and $n=4$ the system is completely integrable. When $n\geq6$ is
even, there are three independent first integrals, while when
$n\geq5$ is odd there exist only two independent first integrals. In
each of these mentioned cases we identify in the parameter space the
conditions for the existence of Darboux first integrals. We also
provide the explicit expressions of these first integrals.
\end{abstract}

\maketitle

\section{Introduction and formulation of the problem}

The real nonlinear ordinary differential systems are widely used to
model processes or reactions in a variety of fields of science, from
biology and chemistry to economy, physics and engineering. The
qualitative theory of dynamical systems is employed to analyze the
behavior of these dynamical systems. Within this analysis one of the
important features is the existence of first integrals of the
differential systems defined in $\R^n$. This is mainly due to the
fact that the existence of a first integral allows to reduce the
dimension of the system by one. So in the qualitative theory of the
differential systems are important the methods allowing to detect
the presence of first integrals.

In this paper we shall apply the Darboux theory of integrability to
real polynomial Lotka-Volterra differential systems. This theory
provides a method of constructing first integrals of polynomial
differential systems, based on the number of invariant algebraic
hypersurfaces that they have. Since its publication in 1878, this
theory originally developed by Darboux, has been extended and/or
refined by many authors first for polynomial differential systems in
$\R^2$ see for instance \cite{CLS, Ch, CL1, CL2, Da, DLA, Ll, LZ3},
and later on for polynomial differential systems in $\R^n$ see
\cite{Jo, LB, LM, CL00, LZ1, LZ2, LZ4, LZ}.

The Lotka-Volterra differential systems (see \cite{L20, V31}) also
called Kolmogorov differential systems (see \cite{Ko}) are used to
model a wide range of experimental processes \cite{AKJ08, CDL04,
DPW88, W75}. In biochemistry, for example the pioneering work of
Wyman \cite{W75} models the autocatalytic chemical reactions, called
by Di Cera et al. \cite{DPW88} a ``turning wheel'' of one-step
transitions of the macromolecule. Turning wheels have multiple
applications in biochemistry. For instance, enzyme kinetics
\cite{W75}, circadian clocks \cite{MH75} and genetic networks
\cite{AKJ08, CDL04} are just a few of them.

In \cite{DPW88} it has been proved that when the law of mass
conservation applies, the autocatalytic chemical reactions between
$x_i,i=1,\ldots,n$ are governed by the following $n$-parameter
family of nonlinear differential equations differential equations
\begin{equation}\label{e1}
\begin{array}{l}
\dot{x}_1=x_1(k_1 x_2 - k_n x_n)=P_1(x_1,\ldots,x_n)=x_1K_1(x_1,
\ldots,x_n),\vspace{0.2cm}\\
\dot{x}_2=x_2( k_2 x_3 -k_1 x_1)=P_2(x_1,\ldots,x_n)=x_2K_2(x_1,
\ldots,x_n),\vspace{0.2cm}\\
\hspace{5cm}\vdots\vspace{0.2cm}\\
\dot{x}_n=x_n(k_n x_1 -k_{n-1}
x_{n-1})=x_nK_n(x_1,\ldots,x_n)=P_n(x_1,\ldots,x_n),
\end{array}
\end{equation}
where the parameters satisfy $k_i\in\R\backslash\{0\}.$ This is only
one of the multiple examples of $n$-dimensional Lotka-Volterra
differential systems.

In the rest of the paper we will study the first integrals of the
differential system \eqref{e1} using the Darboux theory of
integrability.

Let
\begin{equation}\label{X}
X=\displaystyle{P_1\frac{\partial}{\partial
x_1}+P_2\frac{\partial}{\partial
x_2}+\ldots+P_n\frac{\partial}{\partial x_n}},
\end{equation}
be the vector field associated to system \eqref{e1}. Let $U$ be an
open and dense subset of $\R^n$. A {\it first integral} of system
\eqref{e1} is a non-constant function $H:U\to \R$ such that it is
constant on the solutions of system \eqref{e1}, i.e. $XH=0$ in the
points of $U$. Two first integrals $H_i:U\to \R$ for $i=1,2$ of
system \eqref{e1} are {\it independent} if their gradients $\nabla
H_1$ and $\nabla H_2$ are independent in all the points of $U$
except perhaps in a zero Lebesgue measure set of $U$. System
\eqref{e1} is {\it completely integrable} if there exist $n-1$
independent first integrals.

Our main result is the following one.

\begin{theorem}\label{t1}
For the Lotka-Volterra differential system \eqref{e1} with $k_i\ne
0$ for $i=1,2,\ldots,n$ the following statements hold.
\begin{itemize}
\item[(a)] $H=\displaystyle{\sum_{i=1}^nx_i}$ is a first integral.
So in particular for $n=2$ the system is completely integrable.

\bigskip

\item[(b)] For $n\geq 3$ odd
\begin{equation*}\label{e3}
H_1=\displaystyle{\sum_{i=1}^nx_i},\quad
H_2=\displaystyle{x_1\prod_{i=2}^nx_i^{\mu_i}},
\end{equation*}
where
\begin{equation*}\label{lambdan1}
\mu_j=\displaystyle{\frac{k_1k_3\dots k_{j-2}}{k_2k_4\dots
k_{j-1}}}\,\, \mbox{if $j\ge 3$ odd}, \,\,
\mu_j=\displaystyle{\frac{k_{j+1}k_{j+3}\dots k_{n}}{k_jk_{j+2}\dots
k_{n-1}}}\,\, \mbox{if $j\ge 2$ even},
\end{equation*}
are two independent first integrals.

\bigskip

\item[(c)] For $n=3$ the system is completely integrable with the two
independent first integrals
\begin{equation*}\label{t1fi}
H_1=x_1+x_2+x_3,\qquad
H_2=\displaystyle{x_1x_2^{\frac{k_3}{k_2}}x_3^{\frac{k_1}{k_2}}}.
\end{equation*}

\bigskip

\item[(d)] If $n\geq 4$ is even and
$k_1k_3\cdots k_{n-1}=k_2k_4\cdots k_n$, then
\begin{equation*}\label{e4}
H_1=\displaystyle{\sum_{i=1}^nx_i},\quad H_2=\displaystyle{x_1
x_3^{\mu_3}\dots x_{n-1}^{\mu_{n-1}}},\quad H_3=\displaystyle{x_2
x_4^{\mu_4} \dots x_n^{\mu_n}},
\end{equation*}
where
\begin{equation*}\label{lambdan}
\mu_j=\displaystyle{\frac{k_{j+1}k_{j+3}\dots k_{n}}{k_jk_{j+2}\dots
k_{n-1}}}\,\, \mbox{if $j\ge 3$ odd}, \quad
\mu_j=\displaystyle{\frac{k_2k_4\dots k_{j-2}}{k_3k_5\dots
k_{j-1}}}\,\, \mbox{if $j\ge 4$ even},
\end{equation*}
are three independent first integrals.

\bigskip

\item[(e)] For $n=4$ the system is completely integrable if
$k_1k_3=k_2k_4,$ with the three independent first integrals
\begin{equation*}\label{t3fi}
H_1=x_1+x_2+x_3+x_4,\quad
H_2=\displaystyle{x_1x_3^{\frac{k_4}{k_3}}},\quad
H_3=\displaystyle{x_2x_4^{\frac{k_2}{k_3}}}.
\end{equation*}

\end{itemize}
\end{theorem}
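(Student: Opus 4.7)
The plan is to obtain part (a) by a direct telescoping computation and to look for the remaining first integrals among Darboux monomials
$$H=\prod_{i=1}^{n}x_i^{\mu_i},$$
motivated by the fact that each coordinate hyperplane $x_i=0$ is invariant for \eqref{e1} with cofactor $K_i$. Parts (c) and (e) then follow from (b) and (d) just by counting: they produce $n-1$ independent first integrals. Part (a) is immediate, since
$$XH=\sum_{i=1}^n x_i\bigl(k_i x_{i+1}-k_{i-1}x_{i-1}\bigr)$$
(with cyclic indices $x_0=x_n$, $x_{n+1}=x_1$) telescopes to zero after reindexing the second sum by $j=i-1$. For $n=2$ this alone is one independent first integral, giving complete integrability.

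For the monomial ansatz, the identity $X H/H=\sum_{i}\mu_iK_i$ is a linear form in the $x_j$ whose coefficient of $x_j$ is $\mu_{j-1}k_{j-1}-\mu_{j+1}k_j$ (cyclic indices); hence $H$ is a first integral if and only if
$$\mu_{j+1}\,k_j=\mu_{j-1}\,k_{j-1},\qquad j=1,\dots,n. \qquad(\star)$$
Since $(\star)$ has step $2$, its behaviour depends on the parity of $n$. For $n$ odd, the shift by $2$ acts transitively on $\{1,\dots,n\}$, so $(\star)$ has a one-dimensional solution space; normalizing $\mu_1=1$ and iterating $\mu_1\to\mu_3\to\cdots\to\mu_n\to\mu_2\to\cdots\to\mu_{n-1}\to\mu_1$ produces a unique exponent vector, and the loop closes automatically because $\prod_j k_{j-1}/k_j=1$. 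For $n$ even, the iteration decouples into two independent cycles, one on odd indices and one on even indices, and each cycle closes if and only if $k_1k_3\cdots k_{n-1}=k_2k_4\cdots k_n$; under that compatibility the solution space becomes two-dimensional, spanned by the exponent vectors supported on odd and on even indices respectively. Specialising the iterated product and using the compatibility relation where necessary recovers the closed forms of $\mu_j$ listed in (b) and (d).

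Independence on the dense open set $U=\{x_1\cdots x_n\ne 0\}$ is then transparent: $\nabla H_1$ is the constant vector $(1,\dots,1)$, while each monomial $H_\ell$ has logarithmic gradient with components $\mu_i/x_i$ supported on a proper index set (and, in the even case, on disjoint odd/even index sets), so the gradients are linearly independent at a generic point. Counting gives complete integrability in cases (a) for $n=2$, (c), and (e). I expect the principal obstacle to be the purely combinatorial bookkeeping of matching the single scaled product output by the iteration of $(\star)$ with the two-case closed formulas announced in parts (b) and (d): in (b) one must recognise that the iterated product simplifies to one expression for odd $j$ and another for even $j$, and in (d) one must invoke the compatibility $k_1k_3\cdots k_{n-1}=k_2k_4\cdots k_n$ to reconcile the two a priori different expressions for the exponents on each parity class.
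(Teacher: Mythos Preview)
Your proposal is correct and follows essentially the same route as the paper: both arguments use the invariant hyperplanes $x_i=0$ with cofactors $K_i$ and seek Darboux monomials $\prod x_i^{\mu_i}$ by solving the linear system $\sum_i \mu_i K_i=0$, which reduces to your recursion $(\star)$ (the paper's system (e6)), then split into the odd/even parity cases and verify independence via the gradient of $H_1$ versus the support structure of the monomial gradients. Your cycle/transitivity explanation of why the solution space is one-dimensional for odd $n$ and two-dimensional (under the compatibility condition) for even $n$ is a bit more explicit than the paper's ``it is easy to check,'' but the content is the same.
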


Theorem \ref{t1} is proved in the next section.

Let $U$ be an open and dense subset of $\R^n$. The function $M:U\to
\R$ is a {\it Jacobi multiplier} for the Lotka-Volterra differential
system \eqref{e1} if
\[
\sum_{i=1}^n \frac{\partial (MP_i)}{\partial x_i}=0.
\]
The so-called {\it Jacobi Theorem}, see Theorem 2.7 of \cite{G},
applied to our system \eqref{e1} says that if system \eqref{e1}
admits a Jacobi multiplier $M$ and $n-2$ independent first
integrals, then the system admits an extra first integral. An easy
computation shows that the function
$$
M(x_1,\ldots,x_n)=\displaystyle{\frac{1}{\prod_{i=1}^nx_i}}
$$
is a Jacobi multiplier of system \eqref{e1}. However we cannot use
it to improve the number of independent first integrals, because
when the dimension is $n=3$ and $n=4$ the system is completely
integrable, while for $n\geq5$ we do not know $n-2$ independent
first integrals \cite{MT}.

\section{Proof of Theorem \ref{t1}}

Let $f\in \mathbb{R}[x_1,x_2,\ldots,x_{n}]$ be a polynomial. The
algebraic hypersurface $f=0$ of $\R^n$ is an {\it invariant
algebraic hypersurface} of the system \eqref{e1} if there exists a
polynomial $K\in \mathbb{R}[x_1,x_2,\ldots,x_{n}]$ such that
$Xf=Kf.$ The polynomial $K$ is called the {\it cofactor} of $f$. We
note that an invariant hypersurface $f=0$ has the property that if
an orbit of system \eqref{e1} has a point in $f=0$, then the whole
orbit is contained in $f=0$, for more details see for instance
Chapter 8 of \cite{DLA}.

{From} the definition of invariant algebraic hypersurface it follows
immediately that for $i=1,\ldots,n$ the hyperplanes $x_i=0$ are
invariant hyperplanes of system \eqref{e1}, and their corresponding
cofactors are $K_i(x_1,x_2,\ldots,x_{n})$.

The following result is due to Darboux, see \cite{Da}, or Chapter 8
of \cite{DLA}.

\begin{theorem}\label{t2}
Suppose that the polynomial vector field \eqref{e1} admits $n$
invariant algebraic surfaces $f_i=0$ with cofactors $K_i$ for
$i=1,2,\ldots,n.$ If there exist $\lambda_i \in \mathbb{R}$ not all
zero such that $\displaystyle{\sum_{i=1}^n \lambda_i K_i=0},$ then
the function $f_1^{\lambda_1}f_2^{\lambda_2}\ldots f_n^{\lambda_n}$
is a first integral of the vector field \eqref{e1}.
\end{theorem}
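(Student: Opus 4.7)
The plan is to verify directly that $F = f_1^{\lambda_1} f_2^{\lambda_2} \cdots f_n^{\lambda_n}$ satisfies $XF = 0$ on the open set $U = \{x \in \R^n : f_i(x) \neq 0 \text{ for all } i\}$, which is open and dense in $\R^n$ as the complement of finitely many algebraic hypersurfaces. On $U$ the real powers $f_i^{\lambda_i}$ are well defined (choosing, say, the principal branch on each connected component where $f_i$ keeps sign), and $F$ is a smooth non-constant function there.

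The first step is to invoke the defining property of invariant algebraic hypersurfaces: by hypothesis $Xf_i = K_i f_i$ for each $i = 1,\dots,n$. The second step is to differentiate $F$ along the vector field $X$ using the Leibniz rule and the chain rule for real powers. Writing $\log F = \sum_{i=1}^n \lambda_i \log|f_i|$ on each sign component, and applying $X$, one obtains
\[
\frac{XF}{F} = \sum_{i=1}^n \lambda_i \frac{Xf_i}{f_i} = \sum_{i=1}^n \lambda_i \frac{K_i f_i}{f_i} = \sum_{i=1}^n \lambda_i K_i.
\]
The third step is then to apply the hypothesis $\sum_{i=1}^n \lambda_i K_i = 0$ to conclude $XF/F \equiv 0$, and hence $XF \equiv 0$ on $U$. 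Since the $\lambda_i$ are not all zero and the $f_i$ are non-constant polynomials, $F$ is non-constant, so $F$ is a genuine first integral in the sense defined in the paper.

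The only subtle point, and the part most worth being careful about, is making sense of the exponentiation when some $\lambda_i$ is non-integer: this is why one must restrict to $U$ where every $f_i$ is nonzero, and work component by component with respect to sign. Alternatively, one can bypass logarithms by a direct Leibniz computation,
\[
XF = \sum_{i=1}^n \lambda_i\, f_i^{\lambda_i-1}\, Xf_i \prod_{j\neq i} f_j^{\lambda_j} = \sum_{i=1}^n \lambda_i\, K_i\, f_i^{\lambda_i} \prod_{j\neq i} f_j^{\lambda_j} = F\sum_{i=1}^n \lambda_i K_i,
\]
which again vanishes by hypothesis. Either route avoids any essential obstacle; the content of the theorem is really just this one-line derivation combined with the correct specification of the domain $U$.
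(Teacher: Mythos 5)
Your argument is correct, and it is the standard proof of Darboux's criterion. Note that the paper itself does not prove Theorem \ref{t2} at all --- it states the result and refers to Darboux \cite{Da} and Chapter 8 of \cite{DLA} --- so there is no ``paper proof'' to diverge from; your computation is precisely the one found in those references. Both of your routes (the logarithmic derivative and the direct Leibniz expansion) reduce to the same identity $XF = F\sum_{i=1}^n \lambda_i K_i$, and your care about the domain $U=\{f_1\cdots f_n\neq 0\}$ and the sign components is exactly the right way to make real exponents legitimate.

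One small caveat, which is really a defect of the theorem's statement rather than of your proof: the final assertion that $F$ is non-constant does not follow merely from the $\lambda_i$ not all vanishing and the $f_i$ being non-constant. If the $f_i$ are multiplicatively dependent (for instance $f_1=f_2$ with $\lambda_1=-\lambda_2$), then $F$ can be identically $1$. The classical formulations avoid this by taking the $f_i$ pairwise distinct and irreducible, or by simply concluding that $F$ is a (possibly trivial) invariant of the flow. In the application made in this paper the $f_i$ are the distinct coordinate hyperplanes $x_i=0$, so the issue does not arise, but if you want the proof to be airtight for the statement as written you should either add that hypothesis or weaken the conclusion to ``$F$ is constant along orbits, and is a first integral whenever it is non-constant.''
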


\begin{proof}[Proof of statement $(a)$ of Theorem \ref{t1}]
Let $H=\displaystyle{\sum_{i=1}^nx_i}$. Then an easy calculation
shows that $XH=0,$ where $X$ has been defined in \eqref{X}.
\end{proof}

\begin{proof}[Proof of statement $(b)$ of Theorem \ref{t1}]
Assume that $n\ge 3$ is odd. From statement $(a)$ of Theorem
\ref{t1} the function $H_1$ is a first integral. Now we calculate
the other first integral.

For $i=1,2,\ldots,n$ we know that the hyperplane $x_i=0$ is
invariant for system \eqref{e1}, and that its cofactor is
$K_i(x_1,x_2,\ldots,x_{n})= k_i x_{i+1}-k_{i-1}x_{i-1}$. From
Theorem \ref{t2} if there exist $\lambda_i$ not all zero and such
that $\displaystyle{\sum_{i=1}^n \lambda_i K_i=0},$ then
$H=x_1^{\lambda_1}x_2^{\lambda_2}\dots x_n^{\lambda_n}$ is a first
integral of system \eqref{e1}. Then we have
\begin{equation*}\label{e5}
\begin{array}{rl}
\displaystyle{\sum_{i=1}^n \lambda_i K_i}=& x_1
\displaystyle{\left(k_n\lambda_n-k_1\lambda_2\right)}+
x_2\displaystyle{\left(k_1\lambda_1-k_2\lambda_3\right)}+\vspace{0.2cm}\\
& x_3\displaystyle{\left(k_2\lambda_2-k_3\lambda_4\right)}+
x_4\displaystyle{\left(k_3\lambda_3-k_4\lambda_5\right)}+\vspace{0.2cm}\\
& \hspace{3cm}\vdots\\
&\displaystyle{x_{n-1}\left(k_{n-2}\lambda_{n-2}-k_{n-1}\lambda_n\right)}
+x_n\displaystyle{(k_{n-1}\lambda_{n-1}-k_n\lambda_1)}
\vspace{0.2cm}\\
=&0,
\end{array}
\end{equation*}
or equivalently
\begin{equation}\label{e6}
\begin{array}{l}
k_n\lambda_n-k_1\lambda_2= k_1\lambda_1-k_2\lambda_3=
k_2\lambda_2-k_3\lambda_4=
k_3\lambda_3-k_4\lambda_5= \vspace{0.2cm}\\
\hspace{6cm}\vdots\\
k_{n-2}\lambda_{n-2}-k_{n-1}\lambda_n=
k_{n-1}\lambda_{n-1}-k_n\lambda_1=0.
\end{array}
\end{equation}

Then it is easy to check that the solutions $\lambda_j$'s of system
\eqref{e6} are
\begin{equation*}
\lambda_j=\displaystyle{\frac{k_1k_3\dots k_{j-2}}{k_2k_4\dots
k_{j-1}} \lambda_{1}}\,\, \mbox{if $j\ge 3$ odd}, \,
\lambda_j=\displaystyle{\frac{k_{j+1}k_{j+3}\dots
k_{n}}{k_jk_{j+2}\dots k_{n-1}}\lambda_{1}}\,\, \mbox{if $j\ge 2$
even}.
\end{equation*}

Since the unique free lambda is $\lambda_1$, by Theorem \ref{t2} we
can choose $\lambda_1=1$ for obtaining the first integral $H_2$ of
system \eqref{e1} given in statement (b).

Clearly that the integrals $H_1$ and $H_2$ are independent because
the gradient $\nabla H_1=(1,1,\ldots,1)$ is independent of the
gradient $\nabla H_2$.
\end{proof}

\begin{proof}[Proof of statement $(c)$ of Theorem \ref{t1}]
Statement (c) follows immediately from statement (b).
\end{proof}

\begin{proof}[Proof of statement $(d)$ of Theorem \ref{t1}]
Assume that $n\ge 4$ even.  Now we calculate the two additional
first integrals to the integral $H_1$.

Taking into account that $k_1k_3\cdots k_{n-1}=k_2k_4\cdots k_n$ it
is easy to check that the solutions $\lambda_j$'s of system
\eqref{e6} can be written as
\begin{equation*}
\lambda_j=\displaystyle{\frac{k_{j+1}k_{j+3}\dots
k_{n}}{k_jk_{j+2}\dots k_{n-1}}\lambda_{1}}\,\, \mbox{if $j\ge 3$
odd}, \, \lambda_j=\displaystyle{\frac{k_2k_4\dots
k_{j-2}}{k_3k_5\dots k_{j-1}} \lambda_{2}}\,\, \mbox{if $j\ge 4$
even}.
\end{equation*}

Since the unique free lambdas are $\lambda_1$ and $\lambda_2$, we
can choose the following two choices: $(\lambda_{1},\lambda_2)=
(1,0)$ and $(\lambda_{1}, \lambda_2)=(0,1)$, and by applying Theorem
\ref{t2} we obtain the two independent first integrals $H_2$ and
$H_3$ given in the statement (d).

Clearly that these third integrals are independent since $H_2$ has
only even coordinates, $H_3$ only odd coordinates, and the
combination of the gradient vectors of $H_2$ and $H_3$ cannot
provide the gradient of $H_1$.
\end{proof}

\begin{proof}[Proof of statement $(e)$ of Theorem \ref{t1}]
Statement (e) follows immediately from statement (d).
\end{proof}

\section*{Acknowledgements}

The first author is partially supported by a FEDER-MINECO grant
MTM2016-77278-P, a MINECO grant MTM2013-40998-P, and an AGAUR grant
2014SGR-568. The second author acknowledges partial support from a
grant of the Romanian National Authority for Scientific Research and
Innovation, CNCS-UEFISCDI, project number PN-II-RU-TE-2014-4-0657.

\end{document}